\numberwithin{equation}{section}
\numberwithin{figure}{section}
\theoremstyle{plain}
\newtheorem{thm}{\protect\theoremname}
  \theoremstyle{plain}
  \newtheorem{prop}[thm]{\protect\propositionname}
  \theoremstyle{plain}
  \newtheorem{lem}[thm]{\protect\lemmaname}
  \theoremstyle{plain}
  \newtheorem{cor}[thm]{\protect\corollaryname}
  \theoremstyle{remark}
  \newtheorem*{rem*}{\protect\remarkname}
\let\tempone\itemize
\let\temptwo\enditemize
  \providecommand{\corollaryname}{Corollary}
  \providecommand{\lemmaname}{Lemma}
  \providecommand{\propositionname}{Proposition}
  \providecommand{\remarkname}{Remark}
\providecommand{\theoremname}{Theorem}
\begin{document}

\title{The Weil bound and non-exceptional permutation polynomials over finite
fields}

\author{Xiang Fan}

\address{School of Mathematics, Sun Yat-sen University, Guangzhou 510275,
China}
\begin{abstract}
A well-known result of von zur Gathen asserts that a non-exceptional
permutation polynomial of degree $n$ over $\mathbb{F}_{q}$ exists
only if $q<n^{4}$. With the help of the Weil bound for the number
of $\mathbb{F}_{q}$-points on an absolutely irreducible (possibly
singular) affine plane curve, Chahal and Ghorpade improved von zur
Gathen's proof to replace $n^{4}$ by a bound less than $n^{2}(n-2)^{2}$.
Also based on the Weil bound, we further refine the upper bound for
$q$ with respect to $n$, by a more concise and direct proof following
Wan's arguments.
\end{abstract}

\maketitle

\section{Introduction}

Let $\mathbb{F}_{q}$ denote the finite field of characteristic $p$
containing $q=p^{r}$ elements. A polynomial $f\in\mathbb{F}_{q}[x]$
is called a \emph{permutation polynomial} (PP) over $\mathbb{F}_{q}$
if the induced mapping $a\mapsto f(a)$ permutes $\mathbb{F}_{q}$.
Dating back to Hermite \citep{Hermite1863sur}  and Dickson \citep{Dickson1897analytic}
more than one hundred years ago, the study of PPs have been a hot
topic with interesting results occurring. However, the nontrivial
problem of determination or classification of PPs (of some prescribed
forms) are still challenging and interesting.

In 1897, Dickson \citep{Dickson1897analytic} classified all (normalized)
PPs of degree $d$ over $\mathbb{F}_{q}$ with $d\leqslant5$ for
all $q$, or $d=6$ for all odd $q$. (See also Shallue and Wanless
\citep{ShallueWanless2013permutation} for a reconfirmation of the
completeness of Dickson's classification. Note that the completeness
for $(d,q)=(6,3^{r})$ is up to linear transformations.) A remarkable
behavior in Dickson's classification is that, except for a few ``accidents''
over fields of low order, the PPs of a given degree are indeed exceptional
polynomials. By an \emph{exceptional polynomial} over $\mathbb{F}_{q}$
we mean a PP over $\mathbb{F}_{q}$ which is also a PP over $\mathbb{F}_{q^{m}}$
for infinitely many positive integers $m$. This behavior holds generally
for any degree, by the following result of Wan \citep{Wan1987conjecture}:
\begin{prop}
\label{prop:Cn} \emph{\citep[Theorem 2.4]{Wan1987conjecture}}\textbf{\emph{.}}
For every positive integer $n$, there exists a constant $C_{n}$,
depending only on $n$, such that all PPs of degree $n$ over $\mathbb{F}_{q}$
with $q>C_{n}$ are exceptional.
\end{prop}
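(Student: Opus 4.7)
The plan is to prove Proposition~\ref{prop:Cn} along Wan's classical curve-theoretic lines. To a degree-$n$ polynomial $f\in\mathbb{F}_{q}[x]$ I would attach its \emph{substitution polynomial}
\[
\varphi_{f}(x,y):=\frac{f(x)-f(y)}{x-y}\in\mathbb{F}_{q}[x,y],
\]
of total degree $n-1$, and combine two basic facts: (i) $f$ is a PP over $\mathbb{F}_{q}$ if and only if $\varphi_{f}$ has no zero $(a,b)\in\mathbb{F}_{q}^{2}$ with $a\neq b$; and (ii) by a theorem of Cohen, $f$ is exceptional over $\mathbb{F}_{q}$ if and only if $\varphi_{f}$ has no absolutely irreducible factor in $\mathbb{F}_{q}[x,y]$.

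Suppose $f$ is a non-exceptional PP of degree $n$ over $\mathbb{F}_{q}$. By (ii) there is an absolutely irreducible factor $g\in\mathbb{F}_{q}[x,y]$ of $\varphi_{f}$ of some degree $d$ with $1\leq d\leq n-1$. Applying the Weil bound for absolutely irreducible (possibly singular) affine plane curves to $C:g=0$ gives
\[
\#C(\mathbb{F}_{q})\geq q-(d-1)(d-2)\sqrt{q}-K(d),
\]
where the error term $K(d)$, absorbing contributions from singularities and points at infinity, depends only on $d$; one may take $K(d)=O(d^{2})$ by Aubry--Perret or by the singular Weil bound used in Chahal--Ghorpade. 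The at most $d$ points of $C(\mathbb{F}_{q})$ on the diagonal $\{x=y\}$ correspond to roots of $g(x,x)$. By (i), every off-diagonal $\mathbb{F}_{q}$-point of $C$ contradicts the PP property, so
\[
q-(d-1)(d-2)\sqrt{q}-K(d)-d\leq0.
\]
Since $d\leq n-1$ and $K(d)$ is controlled by $n$, this inequality forces $q\leq C_{n}$ for some constant $C_{n}=C_{n}(n)$.

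The main obstacle is twofold. First, one must have a Weil-style bound that remains valid when $g$ defines a singular curve (or one with several branches at infinity); any uniform $K(d)=O(d^{2})$ suffices for the qualitative statement, and refining this error to improve the dependence of $C_{n}$ on $n$ is precisely the concern of the present article. Second, one must dispose of the degenerate possibility that the diagonal lies entirely on $C$; this forces $g=x-y$ (so $d=1$), which in turn means $(x-y)^{2}\mid f(x)-f(y)$, i.e.\ $f'\equiv0$. The purely inseparable case $f=h(x^{p})$ so obtained is either non-permutational or reduces to a lower-degree problem on $h$, and so does not affect the existence of $C_{n}$.
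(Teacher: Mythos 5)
Your plan is correct and follows essentially the same route as the paper, which proves Proposition \ref{prop:Cn} quantitatively in Theorem \ref{thm:bound}: Cohen's criterion produces an absolutely irreducible factor of $(f(x)-f(y))/(x-y)$, and the singular Weil bound forces that factor to have off-diagonal $\mathbb{F}_{q}$-zeros once $q$ exceeds an explicit function of $n$, contradicting the permutation property. The only (cosmetic) difference is that the paper disposes of the inseparable case up front, writing $f=f_{\circ}^{p^{t}}$ with $f_{\circ}'\neq0$ and applying Cohen's theorem to $f_{\circ}$, whereas you handle the degenerate factor $x-y$ (equivalently $f'\equiv0$) at the end by reducing to lower degree; both reductions work.
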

An earlier version of Proposition \ref{prop:Cn} assuming $\mathrm{gcd}(q,n)=1$
was proved in Hayes \citep[Theorem 3.1]{Hayes1967geometric}. A quantitative
version was established by von zur Gathen \citep[Theorem 1]{Gathen1991values}
with $C_{n}$ replaced by $n^{4}$. Recently, Chahal and Ghorpade
\citep{ChahalGhorpade2018Carlitz} followed the arguments of \citep{Gathen1991values}
but utilized a precise version of the Weil bound for the number of
$\mathbb{F}_{q}$-points on an absolutely irreducible (possibly singular)
affine plane curve (the same as our quoted Lemma \ref{lem:Weil-bound}
with $\frac{\lfloor2\sqrt{q}\rfloor}{2}$ replaced by $\sqrt{q}$),
and ameliorated the bound $C_{n}$ to 
\[
\left(\frac{(n-2)(n-3)+\sqrt{(n-2)^{2}(n-3)^{2}+2(n^{2}-1)}}{2}\right)^{2},
\]
which is smaller than $n^{2}(n-2)^{2}$. Also based on the Weil bound,
our Theorem \ref{thm:bound} will refine the bound $C_{n}$ to 
\[
{\displaystyle \left(\frac{(n-2)(n-3)+\sqrt{(n-2)^{2}(n-3)^{2}+8n-12}}{2}\right)^{2}},
\]
which covers the main result of \citep{ChahalGhorpade2018Carlitz}
by a more concise and direct proof following the arguments of Wan
\citep[Theorem 2.4]{Wan1987conjecture} (which is also similar to
Lidl and Niederreiter \citep[\S7.29]{LidlNiederreiter1983finite}).

\section{Exceptional Polynomials}

Our definition of exceptional polynomials is slightly different from
what was used in \citep{Cohen1970distribution,LidlNiederreiter1983finite,Wan1987conjecture,Gathen1991values,ChahalGhorpade2018Carlitz}
and many other references. Note that for $f\in\mathbb{F}_{q}[x]$,
there exists a unique bivariate polynomial $f^{*}(x,y)$ in $\mathbb{F}_{q}[x,y]$
(the ring of polynomials in two indeterminates $x$ and $y$ with
coefficients in $\mathbb{F}_{q}$) such that 
\[
f(x)-f(y)=(x-y)f^{*}(x,y)\quad\text{in }\mathbb{F}_{q}[x,y].
\]
They called $f$ an exceptional polynomial over $\mathbb{F}_{q}$
if the corresponding $f^{*}(x,y)$ has no absolutely irreducible (i.e.
irreducible over an algebraic closure of $\mathbb{F}_{q}$) factor
in $\mathbb{F}_{q}[x,y]$. In other words, their definition means
that each irreducible factor $g(x,y)\in\mathbb{F}_{q}[x,y]$ of $f^{*}(x,y)$
is reducible in $\mathbb{F}_{q^{k}}[x,y]$ for some integer $k>1$.

The difference between these two definitions can be clarified as follows
in our language. Recall that $p$ is the characteristic of $\mathbb{F}_{q}$.
Let $f'$ denote the formal derivative of a polynomial $f\in\mathbb{F}_{q}[x]$.
Note that $f'=0$ if and only if $f(x)=g(x)^{p}$ for some $g\in\mathbb{F}_{q}[x]$.
For a non-constant polynomial $f\in\mathbb{F}_{q}[x]$, repeating
this process if necessary, we can write $f(x)=f_{\circ}(x)^{p^{t}}$
for an integer $t\geqslant0$ and $f_{\circ}\in\mathbb{F}_{q}[x]$
with $f_{\circ}'\neq0$. Clearly for any $\mathbb{F}_{q^{m}}$, $f$
is a PP over $\mathbb{F}_{q^{m}}$ if and only if $f_{\circ}$ is
a PP over $\mathbb{F}_{q^{m}}$. Indeed, the following three statements
are equivalent for any non-constant $f\in\mathbb{F}_{q}[x]$: 

$(1)$ $f$ is an exceptional polynomial over $\mathbb{F}_{q}$;

$(2)$ $f_{\circ}$ is an exceptional polynomial over $\mathbb{F}_{q}$;

$(3)$ $f_{\circ}^{*}(x,y)$ has no absolutely irreducible factor
in $\mathbb{F}_{q}[x,y]$.

$(4)$ Every absolutely irreducible factor of $f(x)-f(y)$ in $\mathbb{F}_{q}[x,y]$
is a constant times $x-y$.

Here $(1)\Leftrightarrow(2)$ by our definition, $(2)\Leftrightarrow(3)$
by Cohen \citep[Theorem 4, 5]{Cohen1970distribution} provided $f_{\circ}'\neq0$,
 and $(3)\Leftrightarrow(4)$ by 
\[
(x-y)f^{*}(x,y)=f_{\circ}(x)^{p^{t}}-f_{\circ}(y)^{p^{t}}=(f_{\circ}(x)-f_{\circ}(y))^{p^{t}}=(x-y)^{p^{t}}f_{\circ}^{*}(x,y)^{p^{t}}.
\]

\section{The Weil Bound}

Andr\'{e} Weil \citep{Weil1948} provided a bound for the $\mathbb{F}_{q}$-points
of smooth projective curves defined over $\mathbb{F}_{q}$. Leep and
Yeomans \citep{LeepYeomans1994number}, Aubry and Perret \citep{AubryPerret1996Weil},
and Bach \citep{Bach1996Weil} established similar bounds for singular
curves. For our use, the following Lemma \ref{lem:Weil-bound} available
by \citep[Corollary 2.b]{LeepYeomans1994number} gives a precise version
of the Weil bound for the number of $\mathbb{F}_{q}$-points on an
absolutely irreducible (possibly singular) affine plane curve over
$\mathbb{F}_{q}$.

Let $|S|$ stand for the cardinality of a set $S$, and $\lfloor t\rfloor$
for the greatest integer $\leqslant t\in\mathbb{R}$. For a bivariate
polynomial $\varphi(x,y)\in\mathbb{F}_{q}[x,y]$, write the set of
its zeros over $\mathbb{F}_{q}$ as 
\[
Z_{q}(\varphi)=\{(a,b)\in\mathbb{F}_{q}\times\mathbb{F}_{q}:\varphi(a,b)=0\}.
\]
By definition, $f\in\mathbb{F}_{q}[x]$ is a PP over $\mathbb{F}_{q}$
if and only if $Z_{q}(f^{*})\subseteq\{(a,a):a\in\mathbb{F}_{q}\}$.
\begin{lem}
\label{lem:Weil-bound} \emph{\citep[Corollary 2.b]{LeepYeomans1994number}}
If $\varphi(x,y)\in\mathbb{F}_{q}[x,y]$ is an absolutely irreducible
bivariate polynomial of degree $d\geqslant1$, then 
\[
q+1-(d-1)(d-2)\frac{\lfloor2\sqrt{q}\rfloor}{2}-d\leqslant|Z_{q}(\varphi)|\leqslant q+1+(d-1)(d-2)\frac{\lfloor2\sqrt{q}\rfloor}{2}.
\]
\end{lem}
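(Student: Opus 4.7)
The plan is to deduce the stated affine bounds from the classical Hasse--Weil--Serre bound applied to a smooth projective model, by carefully comparing $|Z_{q}(\varphi)|$ with the point count on a desingularization of the projective closure.

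First I would pass from the affine curve $V(\varphi)\subset\mathbb{A}^{2}$ to its projective closure $\bar C\subset\mathbb{P}^{2}$, a geometrically integral plane curve of degree $d$, and then to a desingularization $\pi\colon\tilde C\to\bar C$. Since the arithmetic genus of $\bar C$ is $(d-1)(d-2)/2$, the geometric genus $g$ of $\tilde C$ satisfies $g\le(d-1)(d-2)/2$. Serre's integer refinement of the Weil bound applied to the smooth projective absolutely irreducible curve $\tilde C$ then yields
\[
\bigl||\tilde C(\mathbb{F}_{q})|-(q+1)\bigr|\;\le\;g\lfloor2\sqrt{q}\rfloor\;\le\;(d-1)(d-2)\,\frac{\lfloor2\sqrt{q}\rfloor}{2},
\]
which is exactly the shape of the two-sided Weil estimate the lemma demands, with the integer floor accounting for the ``$\lfloor 2\sqrt{q}\rfloor/2$'' improvement over the naive $\sqrt{q}$.

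The remaining and most delicate step is to translate this into bounds on $|Z_{q}(\varphi)|$. I would partition the $\mathbb{F}_{q}$-points of $\tilde C$ according to their images under $\pi$: those lying over smooth affine points of $\bar C$ (which biject with the smooth part of $Z_{q}(\varphi)$), those lying over singular affine points, and those lying over the at most $d$ intersections of $\bar C$ with the line at infinity (bounded by Bézout). The main obstacle is the bookkeeping at singular $\mathbb{F}_{q}$-rational points of $\bar C$: such a point may admit zero, one, or several $\mathbb{F}_{q}$-rational branches, so its contribution to $|\tilde C(\mathbb{F}_{q})|$ can be either larger or smaller than its contribution to $|Z_{q}(\varphi)|$. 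The resolution---which is precisely the content of \citep[Corollary 2.b]{LeepYeomans1994number}---is that any such discrepancy is absorbed in the slack $p_{a}-g=\sum_{P}\delta_{P}$ between arithmetic and geometric genus. Hence using the weaker estimate $g\le(d-1)(d-2)/2$ already pays for the singular-point corrections, and combined with the at most $d$ missing points at infinity it delivers exactly the inequalities stated in Lemma \ref{lem:Weil-bound}.
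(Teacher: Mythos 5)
The paper offers no proof of this lemma: it is quoted verbatim from \citep[Corollary 2.b]{LeepYeomans1994number}, so there is no internal argument to compare against. Your sketch correctly reconstructs how that result is actually proved in the literature (Leep--Yeomans, and in essentially the same way Aubry--Perret): pass to the normalization $\tilde C$ of the projective closure $\bar C$, apply Serre's refinement $\bigl||\tilde C(\mathbb{F}_{q})|-(q+1)\bigr|\leqslant g\lfloor2\sqrt{q}\rfloor$, absorb the singular-point discrepancy into the genus defect $p_{a}-g=\sum_{P}\delta_{P}$, and lose at most $d$ points at infinity by B\'ezout. Two points deserve tightening. First, your appeal to ``the content of Corollary 2.b'' for the singular-point bookkeeping is circular as written, since that corollary is the statement being proved; the actual ingredient you need is the separate comparison $\bigl||\bar C(\mathbb{F}_{q})|-|\tilde C(\mathbb{F}_{q})|\bigr|\leqslant p_{a}-g$ (each singular closed point $P$ carries at most $\delta_{P}+1$ rational points of $\tilde C$ above it), which has its own nontrivial proof via the conductor/skyscraper sheaf of the normalization. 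Second, the final assembly should be made explicit: one needs
\[
g\lfloor2\sqrt{q}\rfloor+(p_{a}-g)\leqslant p_{a}\lfloor2\sqrt{q}\rfloor=(d-1)(d-2)\frac{\lfloor2\sqrt{q}\rfloor}{2},
\]
which holds because $\lfloor2\sqrt{q}\rfloor\geqslant1$; bounding only $g\lfloor2\sqrt{q}\rfloor$ by $p_{a}\lfloor2\sqrt{q}\rfloor$, as your phrasing suggests, does not by itself account for the additive $p_{a}-g$ term. With those two steps filled in, your route is the standard and correct one.
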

\begin{cor}
\label{cor:qd} Let $\varphi(x,y)\in\mathbb{F}_{q}[x,y]$ be an absolutely
irreducible bivariate polynomial of degree $d\geqslant1$, such that
$\varphi(x,x)\neq0$ in $\mathbb{F}_{q}[x]$ and $Z_{q}(\varphi)\subseteq\{(a,a):a\in\mathbb{F}_{q}\}$.
Then 
\[
q+1\leqslant(d-1)(d-2)\frac{\lfloor2\sqrt{q}\rfloor}{2}+2d,
\]
and in particular, $\sqrt{q}\leqslant\dfrac{(d-1)(d-2)+\sqrt{(d-1)^{2}(d-2)^{2}+8d-4}}{2}$.\end{cor}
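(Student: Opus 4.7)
The plan is to combine the lower half of the Weil bound in Lemma \ref{lem:Weil-bound} with a trivial upper bound on $|Z_{q}(\varphi)|$ coming from the diagonal restriction. Since $\varphi$ is absolutely irreducible of degree $d \geqslant 1$, Lemma \ref{lem:Weil-bound} gives
\[
|Z_{q}(\varphi)| \geqslant q + 1 - (d-1)(d-2)\frac{\lfloor 2\sqrt{q}\rfloor}{2} - d.
\]
My next step would be to show that the hypotheses force $|Z_{q}(\varphi)| \leqslant d$. Indeed, by assumption every point of $Z_{q}(\varphi)$ is of the form $(a,a)$ with $a\in\mathbb{F}_{q}$, so each such $a$ is a root of the univariate polynomial $\varphi(x,x)\in\mathbb{F}_{q}[x]$. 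That polynomial is nonzero by hypothesis and has degree at most $d$ (since the total degree of $\varphi$ is $d$), so it has at most $d$ roots in $\mathbb{F}_{q}$.

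Comparing the two bounds immediately produces
\[
q + 1 - (d-1)(d-2)\frac{\lfloor 2\sqrt{q}\rfloor}{2} - d \leqslant d,
\]
which rearranges to the first inequality in the statement. For the ``in particular'' clause, I would use the obvious estimate $\lfloor 2\sqrt{q}\rfloor \leqslant 2\sqrt{q}$ to replace $\frac{\lfloor 2\sqrt{q}\rfloor}{2}$ by $\sqrt{q}$, obtaining
\[
(\sqrt{q})^{2} - (d-1)(d-2)\sqrt{q} - (2d - 1) \leqslant 0.
\]
Viewing this as a quadratic inequality in $\sqrt{q}$ and applying the quadratic formula (keeping only the positive root, since $\sqrt{q}\geqslant 0$) yields exactly the upper bound claimed.

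There is no real obstacle here: the argument is a two-line application of the Weil lower bound against the bookkeeping observation that zeros on the diagonal correspond to roots of $\varphi(x,x)$. The only points that need a moment's care are verifying $\deg \varphi(x,x) \leqslant d$ (immediate from the total degree) and checking the algebra of the quadratic, both of which are routine.
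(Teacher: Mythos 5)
Your proposal is correct and follows essentially the same argument as the paper: the diagonal hypothesis bounds $|Z_{q}(\varphi)|$ by the number of roots of the nonzero polynomial $\varphi(x,x)$, hence by $d$, and comparing with the lower Weil bound of Lemma \ref{lem:Weil-bound} gives the first inequality, after which the quadratic in $\sqrt{q}$ yields the second. The algebra also checks out, since $4(2d-1)=8d-4$ appears correctly under the square root.
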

\begin{proof}
Clearly, $Z_{q}(\varphi)\subseteq\{(a,a):\varphi(a,a)=0,\ a\in\mathbb{F}_{q}\}$.
As $\varphi(x,x)\neq0$ in $\mathbb{F}_{q}[x]$, and $0\leqslant\deg(\varphi(x,x))\leqslant\deg(\varphi(x,y))=d$,
we have 
\[
|Z_{q}(\varphi)|\leqslant|\{a\in\mathbb{F}_{q}:\varphi(a,a)=0\}|\leqslant\deg(\varphi(x,x))\leqslant d.
\]
By Lemma \ref{lem:Weil-bound} for absolutely irreducible $\varphi(x,y)$,
\[
q+1-(d-1)(d-2)\frac{\lfloor2\sqrt{q}\rfloor}{2}-d\leqslant|Z_{q}(\varphi)|\leqslant d.
\]
Replacing $\frac{\lfloor2\sqrt{q}\rfloor}{2}$ by $\sqrt{q}$, we
can deduce the latter inequality.\end{proof}
\begin{rem*}
Corollary \ref{cor:qd} and its proof are quite similar to Hayes \citep[Theorem 2.1]{Hayes1967geometric},
which only used Lang-Weil inequality and gained an abstract bound
for $q$. Also, Aubry and Perret \citep[Lemma 3.2]{AubryPerret1996Weil}
seemed to give the same bound on $q$ as Corollary \ref{cor:qd} by
the same arguments, but unfortunately miscalculated $\sqrt{(d-1)^{2}(d-2)^{2}+8d-4}$
as $\sqrt{(d-1)(d-2)+8d-4}=\sqrt{d^{2}+5d-2}$.
\end{rem*}

\section{The Main Result}
\begin{thm}
\label{thm:bound} If there exists a non-exceptional PP $f$ of degree
$n$ over $\mathbb{F}_{q}$, then  
\[
q+1\leqslant(n-2)(n-3)\dfrac{\lfloor2\sqrt{q}\rfloor}{2}+2(n-1),
\]
and in particular $q\leqslant\left\lfloor \left(\dfrac{(n-2)(n-3)+\sqrt{(n-2)^{2}(n-3)^{2}+8n-12}}{2}\right)^{2}\right\rfloor $.\end{thm}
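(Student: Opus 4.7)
The plan is to apply Corollary~\ref{cor:qd} to an appropriately chosen absolutely irreducible factor of $f(x)-f(y)$. Since $f$ is non-exceptional, the equivalence $(1)\Leftrightarrow(4)$ established in Section~2 furnishes an absolutely irreducible factor $\varphi(x,y)\in\mathbb{F}_q[x,y]$ of $f(x)-f(y)$ that is not a constant multiple of $x-y$. Set $d:=\deg\varphi$; since $f(x)-f(y)=(x-y)f^*(x,y)$ and $\varphi$ is absolutely irreducible but not proportional to $x-y$, it must divide $f^*(x,y)$, and hence $d\leq\deg f^*=n-1$.

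I would then verify the two hypotheses of Corollary~\ref{cor:qd}. If $\varphi(x,x)=0$ in $\mathbb{F}_q[x]$, then $(x-y)\mid\varphi(x,y)$, and the absolute irreducibility of $\varphi$ forces $\varphi$ to be a scalar multiple of $x-y$, contradicting our choice; hence $\varphi(x,x)\neq0$. Moreover, every $(a,b)\in Z_q(\varphi)$ satisfies $f(a)=f(b)$ because $\varphi\mid f(x)-f(y)$, so the permutation property of $f$ over $\mathbb{F}_q$ forces $a=b$, giving $Z_q(\varphi)\subseteq\{(a,a):a\in\mathbb{F}_q\}$.

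Applying Corollary~\ref{cor:qd} to $\varphi$ yields
\[
q+1\leq(d-1)(d-2)\frac{\lfloor2\sqrt{q}\rfloor}{2}+2d.
\]
To upgrade this to the stated bound with $n-1$ in place of $d$, I would let $h(d)$ denote the right-hand side and observe that $h(d+1)-h(d)=(d-1)\lfloor2\sqrt{q}\rfloor+2\geq2$ for $d\geq1$, so $h$ is strictly increasing; since $d\leq n-1$, the inequality persists after substituting $d\mapsto n-1$, yielding
\[
q+1\leq(n-2)(n-3)\frac{\lfloor2\sqrt{q}\rfloor}{2}+2(n-1).
\]
For the ``in particular'' assertion, replacing $\lfloor2\sqrt{q}\rfloor/2$ by $\sqrt{q}$, treating the resulting inequality as a quadratic in $\sqrt{q}$, and squaring gives the closed-form upper bound; taking the integer part is legitimate because $q\in\mathbb{Z}$.

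The proof is essentially a packaging of Corollary~\ref{cor:qd}, so I do not foresee any serious obstacle. The only step requiring explicit attention is the monotonicity of $h(d)$ used to promote the inequality from the actual degree $d$ to the cleaner value $n-1$; everything else is either a direct invocation of an earlier result or a routine rearrangement.
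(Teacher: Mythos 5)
Your proof is correct and follows essentially the same route as the paper: both obtain an absolutely irreducible factor $\varphi$ of $f(x)-f(y)$ other than (a constant times) $x-y$, check the two hypotheses of Corollary~\ref{cor:qd}, and then pass from $d$ to $n-1$ by monotonicity of the bound. The only cosmetic difference is that the paper first replaces $f$ by $f_{\circ}$ (with $f_{\circ}'\neq0$) and verifies $\varphi(x,x)\neq0$ via $f_{\circ}^{*}(x,x)=f_{\circ}'(x)\neq0$, whereas you deduce it directly from the absolute irreducibility of $\varphi$ together with the fact that it is not proportional to $x-y$; both verifications are valid.
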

\begin{proof}
Write $f(x)=f_{\circ}(x)^{p^{t}}$ with an integer $t\geqslant0$
and $f_{\circ}\in\mathbb{F}_{q}[x]$ with nonzero $f_{\circ}'$. Then
$f_{\circ}$ is also a non-exceptional PP over $\mathbb{F}_{q}$,
and $f_{\circ}^{*}(x,y)$ has an absolutely irreducible factor in
$\mathbb{F}_{q}[x,y]$ denoted by $\varphi(x,y)$. As $f_{\circ}^{*}(x,x)=f_{\circ}'(x)$
is nonzero, its factor $\varphi(x,x)$ is also nonzero. For the PP
$f_{\circ}$ over $\mathbb{F}_{q}$, clearly 
\[
Z_{q}(\varphi)\subseteq Z_{q}(f_{\circ}^{*})\subseteq\{(a,a):a\in\mathbb{F}_{q}\}.
\]
Let $d=\deg(\varphi(x,y))\leqslant\deg(f_{\circ}^{*}(x,y))=\deg(f_{\circ})-1\leqslant n-1$.
Note that all PPs of degree $\leqslant3$ are exceptional (cf. Dickson's
classification \citep{Dickson1897analytic}), so $n\geqslant4$. By
Corollary \ref{cor:qd}, 
\[
q+1\leqslant(d-1)(d-2)\frac{\lfloor2\sqrt{q}\rfloor}{2}+2d\leqslant(n-2)(n-3)\frac{\lfloor2\sqrt{q}\rfloor}{2}+2(n-1).
\]
In particular, $q+1\leqslant(n-2)(n-3)\sqrt{q}+2(n-1)$, which implies
the latter inequality.\end{proof}
\begin{rem*}
This proof totally follows the arguments of Wan \citep[Theorem 2.4]{Wan1987conjecture},
which is also similar to Lidl and Niederreiter \citep[\S7.29]{LidlNiederreiter1983finite}.
Our only improvement is replacing the abstract bound that they used
(indeed \citep[Theorem 2.1]{Hayes1967geometric} or \citep[\S7.28]{LidlNiederreiter1983finite})
by the explicit bound or inequality in Corollary \ref{cor:qd}.\end{rem*}
\begin{cor}
\label{cor:409} If there exists a non-exceptional PP of degree $7$
over $\mathbb{F}_{q}$, then $q\leqslant409$.\end{cor}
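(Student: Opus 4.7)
The plan is to substitute $n = 7$ into Theorem~\ref{thm:bound} and argue that the resulting constraint forces $q$ into a small explicit set whose largest prime power is $409$. Concretely, the first-form inequality of the theorem specializes to
\[
q + 1 \leq 10 \lfloor 2\sqrt{q} \rfloor + 12,
\]
which I would then solve in non-negative integers.

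I would carry out a short case split on $m := \lfloor 2\sqrt{q} \rfloor$. For $m \leq 40$ the bound gives $q \leq 10m + 11 \leq 411$. For $m = 41$ the range $421 \leq q \leq 440$ (forced by the definition of $m$) combines with the bound $q \leq 421$ to pin down $q = 421$. For $m \geq 42$ one has $q \geq \lceil m^2/4 \rceil \geq 441 > 10m + 11$, a contradiction. Hence the integer solutions form the set $\{0, 1, \ldots, 411\} \cup \{421\}$. Since $410 = 2 \cdot 5 \cdot 41$ and $411 = 3 \cdot 137$ are not prime powers, the largest prime power at most $411$ is $q = 409$; so the corollary reduces to excluding the single remaining candidate, namely the prime $q = 421$.

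The main obstacle is precisely this borderline case, where the specialized inequality holds with equality $422 = 10 \cdot 41 + 12$. Retracing the chain in the proofs of Theorem~\ref{thm:bound} and Corollary~\ref{cor:qd}, equality forces simultaneously: (i) the absolutely irreducible factor $\varphi$ of $f_\circ^*(x,y)$ has full degree $d = n - 1 = 6$, so $f_\circ^*$ is itself absolutely irreducible; (ii) $\deg \varphi(x, x) = 6$ and all six roots of $\varphi(x, x)$ lie in $\mathbb{F}_{421}$, each furnishing a diagonal zero $(a, a)$ of $\varphi$; and (iii) the Weil--Serre lower bound on $|Z_q(\varphi)|$ is attained exactly. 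To close the residual gap I would try to rule out this rigid configuration, for instance by appealing to a sharper point-count estimate such as Oesterl\'{e}'s bound at $(q, g) = (421, 10)$, or by exploiting the symmetry $\varphi(x, y) = \varphi(y, x)$ inherited from $f_\circ^*$ to derive an extra structural constraint incompatible with minimality of the curve.
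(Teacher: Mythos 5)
Your reduction of the corollary to the single borderline value $q=421$ is correct, and your case split on $m=\lfloor 2\sqrt{q}\rfloor$ is the computation the paper intends: specializing Theorem \ref{thm:bound} to $n=7$ gives $q+1\leqslant 10\lfloor 2\sqrt{q}\rfloor+12$, whose integer solutions are contained in $\{q\leqslant 411\}\cup\{421\}$; the largest prime power at most $411$ is $409$, and $421$ is prime, so everything hinges on excluding $q=421$. The paper disposes of that case by asserting that the displayed inequality ``does not hold for $q\in\{421,419\}$''; for $419$ this is true ($420\not\leqslant 10\cdot 40+12=412$), but for $421$ your computation is the correct one: $\lfloor 2\sqrt{421}\rfloor=\lfloor\sqrt{1684}\rfloor=41$ and $422=10\cdot 41+12$, so the inequality holds with equality and $421$ is \emph{not} excluded by Theorem \ref{thm:bound} as stated. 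You have therefore located a genuine defect in the paper's own proof, not merely a difficulty in yours.

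The problem is that your proposal does not close this case either. Points (i)--(iii) correctly describe the rigid configuration forced by equality (namely $d=6$, so $f^{*}$ itself absolutely irreducible, $f'$ with six distinct roots all in $\mathbb{F}_{421}$, and the Leep--Yeomans lower bound attained), but the suggested remedies --- Oesterl\'e's improvement at $(q,g)=(421,10)$, or a symmetry argument from $\varphi(x,y)=\varphi(y,x)$ --- are left as possibilities rather than carried out, and neither is obviously guaranteed to succeed: the Leep--Yeomans bound already absorbs the genus defect of a possibly singular plane sextic, so any refinement on the smooth model must be checked to survive the passage back to the affine plane curve. As written, your argument establishes only that $q\leqslant 409$ or $q=421$. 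To obtain the corollary as stated one must either supply the missing exclusion of $q=421$ by an honest strengthening of the point-count estimate, or fall back on the computational classification in \citep{Fan2019PP7}, which shows that no non-exceptional PP of degree $7$ exists over $\mathbb{F}_{421}$.
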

\begin{proof}
By Theorem \ref{thm:bound}, $q\leqslant\left\lfloor \left(\frac{20+\sqrt{20^{2}+8\cdot7-12}}{2}\right)^{2}\right\rfloor =421$,
and $q+1\leqslant10\lfloor2\sqrt{q}\rfloor+12$, which does not hold
for $q\in\{421,419\}$. The greatest prime power below $419$ is $409$.
\end{proof}
All exceptional polynomials of degree 7 over $\mathbb{F}_{q}$ have
been classified by M\"{u}ller \citep{Muller1997Weil} and Fried,
Guralnick, Saxl \citep{FriedGuralnickSaxl1993Schur}. Moreover, Li,
Chandler and Xiang \citep{LiChandlerXiang2010permutation} obtained
the classification of all PPs of degree 6 and 7 over $\mathbb{F}_{2^{t}}$
for any $t\geqslant3$. Therefore, to classify all PPs of degree $7$
over $\mathbb{F}_{q}$, it suffices to consider the odd prime powers
$q$ with $7<q\leqslant409$. We have done this with the help of the
SageMath software running on a personal computer, and written down
all PPs of degree $7$ up to linear transformations in our preprint
\citep{Fan2019PP7}. In particular, \citep{Fan2019PP7} shows that
a non-exceptional PP of degree $7$ over $\mathbb{F}_{q}$ (with $q>7$)
exists if and only if
\[
q\in\{9,11,13,17,19,23,25,27,31,49\}.
\]
In particular, given a root $e$ of $x^{2}+6x+3$ in $\mathbb{F}_{7^{2}}$,
$x^{7}+ex^{5}+e^{18}x^{3}+e^{35}x$ is a non-exceptional PP over $\mathbb{F}_{7^{2}}$.
Therefore, for $n=7$, the sharp bound $C_{7}$ in Proposition \ref{prop:Cn}
is $7^{2}$.

A natural question asks for the sharp bound $C_{n}$ for $n>7$. Note
that $n^{2}$ is not a general answer by the non-exceptional PP $f(x)=x^{10}+3x$
over $\mathbb{F}_{7^{3}}$ from Masuda and Zieve \citep{MasudaZieve2009binomial}.
However, it would be interesting to know whether $C_{n}$ can be replaced
by some quadratic in $n$, in the spirit of Mullen \citep[Conjecture 2.5]{Mullen1993PP}.

\end{document}